\newtheorem{thm}{Theorem}[section]
\newtheorem{lem}[thm]{Lemma}
\newcommand{\epn}{\ensuremath{\mathrm{epn}}}
\begin{document}
\title{A Lower bound for Secure Domination Number of an Outerplanar Graph}
\author{Toru Araki}
\date{Gunma University}
\maketitle

\begin{abstract}
  A subset $S$ of vertices in a graph $G$ is a secure dominating set
  of $G$ if $S$ is a dominating set of $G$ and, for each vertex
  $u \not\in S$, there is a vertex $v \in S$ such that $uv$ is an
  edge and $(S \setminus \{v\}) \cup \{u\}$ is also a dominating set
  of $G$.
  The secure domination number of $G$, denoted by $\gamma_{s}(G)$,
  is the cardinality of a smallest secure dominating sets of $G$.
  In this paper, we prove that for any outerplanar graph with
  $n \geq 4$ vertices, $\gamma_{s}(G) \geq (n+4)/5$ and the bound is
  tight.
  \vspace{1em}
  Secure dominating set,secure domination number, outerplanar graph,
  lower bound
\end{abstract}

\section{Introduction}
\label{sec:introduction}

We consider a finite undirected graph $G$ with vertex set $V(G)$ and
edge set $E(G)$ and use standard graph theoretic terminology and
notations (see, for example~\cite{chartrand11}).
The \emph{open neighborhood} of a vertex $v \in V(G)$ is defined by
$N_{G}(v) = \{u \mid vu \in E(G) \}$, and the \emph{closed
  neighborhood} of $v$ is $N_{G}[v]=N_{G}(u) \cup \{v\}$.
We denote by $\deg_{G}(v) = |N_{G}(v)|$ the degree of $v$.
For a subset $U \subseteq V(G)$, the subgraph induced by $U$ is
denoted by $G[U]$.
A vertex $v$ is a \emph{cut-vertex} of a connected graph $G$ if
$G-\{v\}$ is disconnected.
A \emph{block} of a connected graph of $G$ is a maximal subgraph
that has no cut-vertices.

A set $S \subseteq V(G)$ is a \emph{dominating set} of $G$ if each
vertex $u \in V(G) \setminus S$ is adjacent to some vertex in $S$.
The \emph{domination number} $\gamma(G)$ of $G$ is the smallest
cardinality of a dominating set of $G$.
Let $S$ be a dominating set of $G$.
A vertex $v \in S$ is said to \emph{defend} $u \not\in S$
if $uv \in E(G)$ and $S' =(S \setminus \{v\}) \cup \{u\}$ is also a
dominating set.
A dominating set $S$ is a \emph{secure dominating set} if, for any
$u \in V(G) \setminus S$, there exists a vertex $v \in S$ such that
$uv \in E(G)$ and $(S \setminus \{v\}) \cup \{u\}$ is also a
dominating set.
The \emph{secure domination number} $\gamma_{s}(G)$ of $G$ is the
smallest cardinality of a secure dominating set of $G$.

The problem of secure domination was introduced by Cockayne et al.~and
investigated some fundamental properties of a secure dominating set,
and obtained exact values of $\gamma_{s}(G)$ for some graph classes,
such as paths, cycles, complete multipartite
graphs~\cite{cockayne08:_protec}.
Various aspects of secure domination have been
researched~\cite{burger16,merouane15,burger08:_vertex,li17}.
Some polynomial-time algorithms for computing the secure domination
number of some restricted graph classes are investigated
~\cite{burger14,araki18:_secur,pradhan17,araki19:_secur,jha19,zou19,araki23,kisek21}.
Upper and lower bounds on $\gamma_{s}(G)$ have been established for
some graph classes~\cite{li17,merouane15,burger08:_vertex,araki18}.

A graph $G$ is \emph{outerplanar} if it has a crossing-free embedding
in the plane such that all vertices belong to the boundary of its
outer face (the unbounded face).
A \emph{maximal outerplanar graph} is an outerplanar graph such that
the addition of a single edge results in a graph that is not
outerplanar.
An inner face of a maximal outerplane graph $G$ is an \emph{internal
  triangle} if it is not adjacent to the outer face of $G$.
A maximal outerplane graph without internal triangles is called
\emph{stripped}.
Matheson and Tarjan~\cite{matheson96:_domin} proved a tight upper
bound for the domination number on the class of \emph{triangulated
  discs}: graphs that have an embedding in the plane such that all of
their faces are triangles, except possibly one.
They proved that $\gamma(G) \leq n/3$ for any $n$-vertex triangulated
disc.
Campos and Wakabayashi~\cite{campos13} showed that if $G$ is a maximal
outerplanar graph with $n$ vertices, then $\gamma(G) \leq (n+k)/4$ where
$k$ is the number of vertices of degree 2.
Tokunaga proved the same result independently
in~\cite{tokunaga13:_domin}.
Li et al.~\cite{li16} improved the result by showing that
$\gamma(G) \leq (n+t)/4$, where $t$ is the number of pairs of
consecutive degree 2 vertices with distance at least 3 on the outer
cycle.
For the secure domination problem in an outerplanar graph, the
author~\cite{araki18} proved that
$\gamma_{s}(G) \leq \lceil 3n/7 \rceil$ and the upper bound is sharp,
and also showed that for a stripped maximal outerplanar graph $G$,
$n/4 < \gamma_{s}(G) \leq \lceil n/3 \rceil$.

The purpose of this paper is to prove the next theorem that gives a
lower bound of the secure domination number for an outerplanar graph.

\begin{thm}
  \label{thm:lb}

  For an outerplanar graph with $n \geq 4$ vertices,
  \begin{displaymath}
    \gamma_{s}(G) \geq \frac{n+4}{5},
  \end{displaymath}
  and the bound is tight.
\end{thm}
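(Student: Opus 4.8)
The plan is to recast the claim as the equivalent statement \(n \le 5\gamma_s(G)-4\) and to reduce to the maximal case. First I would observe that \(\gamma_s\) is non-increasing under edge additions: if \(S\) is a secure dominating set of \(G\) and \(G'\) is obtained by adding edges on the same vertex set, then \(S\) still dominates \(G'\), and every defending witness in \(G\) remains one in \(G'\) (domination of \((S\setminus\{v\})\cup\{u\}\) is preserved when edges are added), so \(\gamma_s(G')\le\gamma_s(G)\). Hence, given an arbitrary outerplanar \(G\) on \(n\ge 4\) vertices, I would add edges until it becomes a connected maximal outerplanar graph \(G^{+}\) on the same \(n\) vertices, giving \(\gamma_s(G)\ge\gamma_s(G^{+})\). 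It therefore suffices to prove the bound when \(G\) is maximal outerplanar, i.e. a triangulated polygon; I then fix a minimum secure dominating set \(S\) and aim to show \(n\le 5|S|-4\).

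\textbf{Key lemma.} For \(v\in S\) and \(u\in\epn(v,S)\), the only neighbour of \(u\) in \(S\) is \(v\), so \(v\) must itself defend \(u\). Removing \(v\) from \(S\) strips domination exactly from \(v\) and from the vertices of \(\epn(v,S)\); since no vertex of \(\epn(v,S)\) has another \(S\)-neighbour, \((S\setminus\{v\})\cup\{u\}\) can dominate only if \(u\) is adjacent to every other vertex of \(\epn(v,S)\). Thus \(\{v\}\cup\epn(v,S)\) is a clique, and as \(K_4\) is not outerplanar, \(|\epn(v,S)|\le 2\). The same computation yields the crucial extra constraint: if \(v\) defends a vertex \(u\) that is \emph{not} a private neighbour of \(v\), then \(\{v,u\}\cup\epn(v,S)\) is a clique, so a vertex with two private neighbours can defend nothing beyond those two.

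\textbf{Counting.} I would split \(V(G)\setminus S\) into the privately dominated vertices \(P\) (exactly one \(S\)-neighbour) and the multiply dominated vertices \(Q\) (at least two). The lemma gives \(|P|=\sum_{v\in S}|\epn(v,S)|\le 2|S|\), so, since \(n=|S|+|P|+|Q|\), it remains to bound \(|Q|\le 2|S|-4\); this is the heart of the proof. Every \(u\in Q\) needs a defender, which by the lemma must be an \(S\)-vertex with at most one private neighbour, and a defender \(v\) with one private neighbour \(a\) can only defend common neighbours of the edge \(va\); as outerplanar graphs contain no \(K_{2,3}\), each edge lies in at most two triangles, which bounds the load of such defenders. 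I would combine these local bounds in a discharging argument on the weak dual tree of the triangulation, assigning each vertex one unit and redistributing so that every \(v\in S\) absorbs at most \(5\) units, recovering the additive surplus \(4\) from the two ears (leaves of the dual tree) that every triangulated polygon possesses; the small base cases \(|S|=2\) can be verified directly.

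\textbf{Main obstacle and tightness.} The hard step is the bound on \(|Q|\): an \(S\)-vertex with \emph{no} private neighbour may defend many multiply dominated vertices, so the discharging must exploit the triangulation structure (bounded clique size together with the absence of \(K_{2,3}\)) to show that such defenders cannot be simultaneously numerous and heavily loaded, and must extract the exact constant \(4\) from the outer boundary rather than merely the weaker \(n\le 5|S|\). Finally, I would establish tightness by exhibiting, for each \(k\), a maximal outerplanar graph on \(n=5k-4\) vertices (a suitable chain of triangle gadgets \(v_ia_ib_i\)) that admits a secure dominating set of size \(k\), matching the bound; the case \(k=2\) already gives the triangulated hexagon with \(\gamma_s=2\).
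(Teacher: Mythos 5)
Your opening moves are sound: the monotonicity of $\gamma_s$ under edge addition is correct (both domination conditions are preserved in a spanning supergraph), and your key lemma is exactly the paper's Theorem~\ref{thm:sds_c}, from which $|\epn_G(v,S)| \leq 2$ and the fact that a vertex with two private neighbors defends nothing else both follow as in the paper. But the heart of your proof is missing, and the intermediate target you set for it is false. You split $V(G)\setminus S$ into $P$ and $Q$, note $|P| \leq 2|S|$, and declare that ``it remains to bound $|Q| \leq 2|S|-4$.'' That inequality fails precisely at the extremal configurations: in the paper's graph $G_k$ (or in any maximal outerplanar graph obtained by triangulating $G_k$, which still has $\gamma_s = k+1$ by your own monotonicity observation together with the lower bound), a minimum secure dominating set has $|S| = k+1$ and $|Q| = 2k = 2|S|-2$. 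So no discharging argument can prove your target, even after the reduction to maximal outerplanar graphs. The slack of $4$ cannot be extracted from $|Q|$ alone; it comes from the \emph{interaction} between the two bounds: when $|Q|$ is large, the $Q$-vertices must be defended by $S$-vertices with at most one private neighbor (you proved $S_2$-vertices cannot defend them), which depresses $|P|$ below $2|S|$. The paper captures this by partitioning $S = S_2 \cup S_1 \cup S_0$ according to $|\epn_G(v,S)|$ and combining three inequalities: the counting inequality $c \geq 2x_2+3x_1+4x_0-3$ from the contrary assumption; the structural bound $c \leq 2|S|-2$, obtained by applying Lemma~\ref{lem:bipatite_op} to the bipartite outerplanar graph on $(S,C)$ in which every $C$-vertex has degree at least $2$; and $c \leq 2x_1$, since after $x_0 = 0$ is forced only $S_1$-vertices can defend $C$-vertices, each at most twice (no $K_{2,3}$). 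Your sketch has no substitute for the bipartite lemma: you correctly identify the $S_0$-vertices as the obstacle (in $G_k$ the hub $v$ defends all $2k$ vertices of $C$), but you give no rule that caps their load, so the discharging on the weak dual tree remains a plan rather than a proof.

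Your tightness construction is also suspect. A ``chain of triangle gadgets'' is a stripped maximal outerplanar graph, and for those the paper cites the bound $\gamma_s(G) > n/4$; since $n/4 > (n+4)/5$ whenever $n > 16$, no such chain can achieve $(n+4)/5$ for large $k$. This is not an accident: Theorem~\ref{thm:extremal} shows that every extremal outerplanar graph on $n = 5k+1$ vertices must contain the hub-and-spokes graph $G_k$ as a spanning subgraph, including a vertex adjacent to $2k$ vertices of $C$ --- a structure no triangle chain possesses. Your verification of the triangulated hexagon ($k=2$, $n=6$) succeeds only because $6 \leq 16$; the family you propose would fail for larger $k$, whereas the paper's $G_k$ (hub $v$ joined to $w_1,\dots,w_{2k}$, with $v_i$ attached to $w_{2i-1},w_{2i}$ and carrying a pendant triangle $v_i u_i^1 u_i^2$) works for all $k \geq 2$.
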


Note that $K_{3}$ is an outerplanar graph and $\gamma_{s}(K_{3})=1$
which does not satisfy the lower bound.  So the condition $n \geq 4$
is necessary.


\section{Preliminaries}
\label{sec:prelim}

Let $S$ be a dominating set of $G$, and $u \in V(G) \setminus S$.
For a vertex $v \in S$, a vertex $u$ is an \emph{external private
  neighbor of $v$ with respect to $S$} if $N_{G}(u) \cap S = \{v\}$.
We denote by $\epn_{G}(v,S)$ the set of all external private neighbors
of $v$ with respect to $S$.
Cockayne et al.~\cite{cockayne08:_protec} proved a fundamental
property of a secure dominating set.

\begin{thm}[\cite{cockayne08:_protec}]
  \label{thm:sds_c}
  Let $S$ be a dominating set of $G$.
  A vertex $v \in S$ defends $u \in V(G) \setminus S$ if and only if
  $G[\epn_{G}(v,S) \cup \{u,v\}]$ is complete.
\end{thm}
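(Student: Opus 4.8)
The plan is to characterize when $S' = (S \setminus \{v\}) \cup \{u\}$ is a dominating set by tracking how replacing the guard $v$ by $u$ changes the coverage of the vertices. Since $S$ is already dominating and inserting $u$ can only add coverage, the only vertices whose domination is at risk are those that were dominated in $S$ exclusively through $v$. First I would pin down this at-risk set precisely: it consists of $v$ itself, which once removed from the set must now be dominated from within $S'$, together with the external private neighbors $\epn_{G}(v,S)$. Every other vertex outside $S$ either equals $u$ (and so lies in $S'$) or, not being an external private neighbor, retains a neighbor in $S \setminus \{v\} \subseteq S'$ and hence stays dominated.

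With the at-risk set in hand, $S'$ is a dominating set if and only if $u$ dominates all of it. Concretely, $v$ is dominated in $S'$ exactly when $uv \in E(G)$, and each external private neighbor $w \neq u$ satisfies $N_{G}(w) \cap S = \{v\}$, so $w$ has no neighbor in $S \setminus \{v\}$ and is therefore dominated in $S'$ if and only if $uw \in E(G)$. Thus I would reduce the defense of $u$ by $v$ to the single combinatorial condition that $u$ be adjacent to $v$ and to every external private neighbor of $v$ distinct from $u$.

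It then remains to match this condition with completeness of $G[\epn_{G}(v,S) \cup \{u,v\}]$. The \emph{if} direction is immediate: completeness supplies $uv \in E(G)$ together with $uw \in E(G)$ for every external private neighbor $w$, so $S'$ dominates and $v$ defends $u$. For the \emph{only if} direction, defense yields $uv \in E(G)$ and all the edges from $u$ to the external private neighbors, while the edges from $v$ to each external private neighbor hold by the very definition of $\epn_{G}(v,S)$. I expect the main obstacle to be the remaining adjacencies \emph{among} the external private neighbors themselves: the bare domination argument delivers only a complete join of the edge $\{u,v\}$ to the set $\epn_{G}(v,S)$, so the crux of the proof is to argue that in the present setting these private neighbors are pairwise adjacent as well, so that the induced subgraph is genuinely complete rather than merely this join.
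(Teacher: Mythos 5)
Your reduction is the right first move, and it is carried out correctly: since adding $u$ can only help, the vertices at risk in $S' = (S \setminus \{v\}) \cup \{u\}$ are exactly $v$ and the members of $\epn_{G}(v,S)$, so $v$ defends $u$ if and only if $uv \in E(G)$ and $u$ is adjacent to every vertex of $\epn_{G}(v,S) \setminus \{u\}$, i.e., $\epn_{G}(v,S) \subseteq N_{G}[u]$. This also settles the ``if'' direction, since completeness implies those adjacencies. But at the step you yourself flag as the crux --- upgrading the complete join of $\{u,v\}$ with $\epn_{G}(v,S)$ to a genuinely complete induced subgraph --- the proposal simply stops: no argument is given that two external private neighbors $w_{1}, w_{2} \in \epn_{G}(v,S)$ must be adjacent, and in fact none can be given from the stated hypotheses. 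Take $G = K_{4}$ minus the edge $w_{1}w_{2}$, label the two remaining vertices $v$ and $u$, and let $S = \{v\}$. Then $S$ is a dominating set, $\epn_{G}(v,S) = \{u, w_{1}, w_{2}\}$, and $v$ defends $u$ because $N_{G}[u] = V(G)$; yet $G[\epn_{G}(v,S) \cup \{u,v\}] = G$ is not complete. So the ``only if'' direction, read literally for an arbitrary dominating set, is false, and the characterization you derived ($uv \in E(G)$ together with $\epn_{G}(v,S) \subseteq N_{G}[u]$) is the correct general form of the lemma; the paper itself offers no internal proof, importing the statement from Cockayne et al., where it is this containment condition that is actually established.

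The completeness formulation does become true --- and this is the setting in which the paper applies it --- under the additional hypothesis that every vertex of $\epn_{G}(v,S)$ is itself defended, as happens when $S$ is a secure dominating set: each $w \in \epn_{G}(v,S)$ has $v$ as its unique neighbor in $S$, so only $v$ can defend $w$, and applying your own characterization with $w$ in place of $u$ gives $w$ adjacent to all of $\epn_{G}(v,S) \setminus \{w\}$; together with the join edges (and $v$'s adjacency to its private neighbors, which holds by definition) this makes $G[\epn_{G}(v,S) \cup \{u,v\}]$ complete. Note that in the counterexample above $S = \{v\}$ is not secure dominating, since nothing defends $w_{1}$. So to close your plan you must either add such a hypothesis and supply this one extra application of your characterization, or weaken the conclusion to $\epn_{G}(v,S) \subseteq N_{G}[u]$; as written, the proposal has a genuine gap at precisely the point it identifies, and the gap is not merely an omitted routine step but an unprovable one under the theorem's stated assumptions.
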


A graph $H$ is a \emph{subdivision} of $G$ if either $G=H$ or $H$ can
be obtained from $G$ by inserting vertices of degree 2 into the edges
of $G$.
The following characterization of outerplanar graphs is known
(see~\cite{chartrand11}, for example).

\begin{thm}
  \label{thm:charact}
  A graph $G$ is outerplanar if and only if $G$ contains no subgraph
  that is a subdivision of the complete graph $K_{4}$ or the complete
  bipartite graph $K_{2,3}$.
\end{thm}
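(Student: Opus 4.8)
The plan is to reduce the statement to Kuratowski's theorem for planarity by means of the \emph{cone} construction. Write $G^{+}$ for the graph obtained from $G$ by adjoining one new vertex $w$ joined to every vertex of $V(G)$. The whole argument rests on three links, which together yield the chain of equivalences
\[
  G \text{ is not outerplanar} \iff G^{+} \text{ is not planar} \iff G^{+} \text{ has a subdivision of } K_{5} \text{ or } K_{3,3} \iff G \text{ has a subdivision of } K_{4} \text{ or } K_{2,3},
\]
the middle equivalence being exactly Kuratowski's theorem~\cite{chartrand11}; negating both ends gives the theorem. Since a subdivision of $K_{4}$ or $K_{2,3}$ is connected, and a graph is outerplanar if and only if each of its components is, I may assume throughout that $G$ is connected.

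The first link is the lemma that $G$ is outerplanar if and only if $G^{+}$ is planar. For the forward direction I would fix an outerplanar embedding of $G$, place $w$ in the unbounded face, and join $w$ to the vertices of $G$ in the cyclic order in which they occur on the boundary of that face; since every vertex lies on the outer boundary, these curves can be routed through the outer region without crossings, giving a planar embedding of $G^{+}$. Conversely, given a planar embedding of $G^{+}$, I would delete $w$ together with its incident edges: the faces incident to $w$ coalesce into a single face $f$ of $G$, and because $w$ is adjacent to every vertex, the boundary of $f$ meets all of $V(G)$. Taking $f$ as the outer face exhibits $G$ as outerplanar.

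The third link---that $G^{+}$ contains a subdivision of $K_{5}$ or $K_{3,3}$ if and only if $G$ contains a subdivision of $K_{4}$ or $K_{2,3}$---is where the real work lies, and I expect it to be the main obstacle. The easy direction adjoins the apex to a small subdivision in $G$: from a $K_{4}$-subdivision, its four branch vertices together with $w$ (adjacent to all of them) give a $K_{5}$-subdivision in $G^{+}$; from a $K_{2,3}$-subdivision with parts $\{a,b\}$ and $\{c,d,e\}$, the sets $\{a,b,w\}$ and $\{c,d,e\}$ form a $K_{3,3}$-subdivision, using the direct edges $wc$, $wd$, $we$. For the harder direction I would take a subdivision $H$ of $K_{5}$ or $K_{3,3}$ in $G^{+}$ with underlying base $B$ and analyse the role of $w$. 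If $w$ is a branch vertex, deleting it and the paths meeting it leaves a subdivision of $B-v$ inside $G$, and $K_{5}-v=K_{4}$, $K_{3,3}-v=K_{2,3}$. If $w$ does not occur in $H$, then $H$ already lies in $G$ and contains the required subdivision since $K_{5}\supseteq K_{4}$ and $K_{3,3}\supseteq K_{2,3}$.

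The delicate case is the one where $w$ is an internal, degree-$2$ vertex of $H$. Then $w$ lies on exactly one path $P_{e}$ of $H$, corresponding to a single edge $e$ of $B$, so deleting $w$ damages only $P_{e}$ and leaves every other path intact; hence $H-w$ contains a subdivision of $B-e$, lying entirely in $G$. Here I would invoke the elementary containments $K_{5}-e\supseteq K_{4}$ and $K_{3,3}-e\supseteq K_{2,3}$ to recover a subdivision of $K_{4}$ or $K_{2,3}$ in $G$. Verifying these two containments, and checking carefully that removing the degree-$2$ vertex $w$ corresponds precisely to removing one edge of the base graph, is the crux; once they are in place the chain of equivalences closes and the theorem follows.
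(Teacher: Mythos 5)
Your proposal cannot be checked against a proof in the paper, because the paper offers none: Theorem~\ref{thm:charact} is the classical characterization of outerplanar graphs due to Chartrand and Harary, quoted as known with a pointer to~\cite{chartrand11}, and the paper only uses its consequences (that an outerplanar graph contains no subdivision of $K_{4}$ or $K_{2,3}$, hence in particular no $K_{4}$ or $K_{2,3}$ subgraph). What you supply is essentially the standard textbook argument, and it is correct: outerplanarity of $G$ is equivalent to planarity of the cone $G^{+}=G+w$, and Kuratowski's theorem then transfers the forbidden-subdivision characterization down one level. Your case analysis in the hard direction is exhaustive, since every vertex of a subdivision $H$ with base $B$ is either absent from $H$, a branch vertex, or an internal degree-$2$ vertex of exactly one branch path; and the containments you flag as the crux do hold: $K_{5}-v\cong K_{4}$ and $K_{3,3}-v\cong K_{2,3}$ for the branch-vertex case, while for the degree-$2$ case $K_{4}\subseteq K_{5}-e$ (take the three vertices not incident with $e$ together with one endpoint of $e$) and $K_{2,3}\subseteq K_{3,3}-e$ (discard one endpoint of $e$), and discarding the whole damaged path $P_{e}$ indeed leaves a subdivision of $B-e$ avoiding $w$. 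Two spots deserve slightly more care than you give them: in the forward direction of the first link, when $G$ is not $2$-connected the boundary of the outer face is a closed walk rather than a simple cycle, so ``joining $w$ in cyclic order'' should be phrased as routing arcs to occurrences of the vertices along the boundary walk; and in the converse direction, the coalescence of the faces incident with $w$ into one face meeting all of $V(G)$ is exactly where the universality of $w$ is used and should be stated as such. Your preliminary reduction to connected $G$ is harmless but unnecessary, as both links work verbatim for disconnected $G$. On the trade-off: the paper's black-box citation is appropriate there, since the theorem plays only an auxiliary role; your route makes it self-contained modulo Kuratowski, which is the natural stopping point, since any proof from first principles would essentially have to reprove Kuratowski or give a direct embedding argument.
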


The following lemma is used in the next section.
A proof of Lemma~\ref{lem:bipatite_op} is given in Section~4
of~\cite{aita20}.

\begin{lem}
  \label{lem:bipatite_op}
  Let $G$ be a bipartite outerplanar graph with a bipartition $(X, Y)$
  and $|X| \geq 2$.
  If $\deg_{G}(v) \geq 2$ for any $v \in Y$, then $|Y| \leq 2|X|-2$.
\end{lem}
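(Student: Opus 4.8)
The plan is to reduce the inequality to a purely combinatorial statement about an auxiliary simple graph on $X$, and then prove that statement by a degeneracy induction.

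First I would pass to a spanning subgraph $G'$ of $G$ obtained by deleting, for each $v\in Y$, all but two of the edges incident with $v$; this is possible since $\deg_G(v)\ge 2$, and $G'$ is again a bipartite outerplanar graph with parts $X$ and $Y$ in which every vertex of $Y$ now has degree \emph{exactly} $2$. Since $|X|$ and $|Y|$ are unchanged, it suffices to prove $|Y|\le 2|X|-2$ for $G'$, so I may assume each $v\in Y$ has exactly two neighbours $a_v,b_v\in X$. Record $v$ by the pair $\{a_v,b_v\}$, and let $H^{*}$ be the simple graph on vertex set $X$ whose edges are the realised pairs; for an edge $\{a,b\}$ of $H^{*}$ call its \emph{multiplicity} the number of vertices of $Y$ realising it. Applying Theorem~\ref{thm:charact} to the simple outerplanar graph $G'$, no pair can have multiplicity $\ge 3$: three $Y$-vertices on the same pair $\{a,b\}$ would be the interiors of three internally disjoint $a$--$b$ paths of length $2$, i.e.\ a subdivision of $K_{2,3}$. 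Hence every multiplicity is $1$ or $2$, and writing $d$ for the number of pairs of multiplicity $2$, we have $|Y|=|E(H^{*})|+d$.

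The key structural step is to show $d\le\beta(H^{*})$, where $\beta(H^{*})$ denotes the number of bridges (cut edges) of $H^{*}$. I claim that a pair $\{a,b\}$ of multiplicity $2$ must be a bridge of $H^{*}$. Indeed, if $ab$ lies on a cycle of $H^{*}$ then there is an $a$--$b$ path in $H^{*}-ab$, which in $G'$ yields an $a$--$b$ path internally disjoint from the two length-$2$ paths through the two $Y$-vertices realising $\{a,b\}$; the three paths then form a subdivision of $K_{2,3}$ in $G'$, contradicting Theorem~\ref{thm:charact}. Therefore $d\le\beta(H^{*})$ and $|Y|\le |E(H^{*})|+\beta(H^{*})$.

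It remains to prove $|E(H^{*})|+\beta(H^{*})\le 2|X|-2$. I would first establish, for every \emph{connected} outerplanar simple graph $K$, that $|E(K)|+\beta(K)\le 2|V(K)|-2$, and then sum over the connected components of $H^{*}$, where an isolated vertex contributes $0\le 0$ and at least one component supplies the $-2$. The connected statement goes by induction on $|V(K)|$: the cases $|V(K)|\le 2$ are immediate, and for $|V(K)|\ge 3$ I use that an outerplanar graph is $2$-degenerate, so it has a vertex $w$ with $\deg_K(w)\le 2$. When $\deg_K(w)=1$ I delete $w$; when $\deg_K(w)=2$ I delete $w$ if its two neighbours are adjacent and suppress it (replacing the path through $w$ by a single edge) otherwise. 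Each operation produces a smaller connected outerplanar graph, and a short case check on how $|E|+\beta$ changes shows the inductive bound $2(|V(K)|-1)-2$ propagates to $2|V(K)|-2$. Combining the three displayed inequalities gives $|Y|\le 2|X|-2$.

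The main obstacle I anticipate is precisely the bookkeeping of how the bridge count $\beta$ behaves under the local deletion/suppression move, since such a move can turn a non-bridge into a bridge (for instance, removing one vertex of a triangle can leave the opposite edge a bridge); the case analysis must confirm that $|E|+\beta$ never overshoots the target in any configuration. I would also note that the bound is tight: taking $H^{*}$ to be a path on $X$ in which every (necessarily bridge) edge has multiplicity $2$ reconstructs a chain of $4$-cycles glued at the vertices of $X$, for which $|Y|=2|X|-2$.
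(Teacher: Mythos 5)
Your argument is essentially correct, but note a point of comparison first: this paper does not actually prove Lemma~\ref{lem:bipatite_op} at all --- it defers the proof to Section~4 of~\cite{aita20} --- so your proposal should be judged as a self-contained substitute rather than against an in-paper argument, and as such it succeeds. The reduction to $\deg_{G'}(v)=2$ for all $v\in Y$, the identity $|Y|=|E(H^{*})|+d$, the exclusion of multiplicity $\geq 3$ (three internally disjoint length-$2$ paths are exactly a $K_{2,3}$), and the claim that a multiplicity-$2$ pair is a bridge of $H^{*}$ are all correct uses of Theorem~\ref{thm:charact}; the internal disjointness in the bridge argument holds because each vertex of $Y$ realises exactly one pair, so the lifted $a$--$b$ path uses $Y$-vertices distinct from the two realising $\{a,b\}$. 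Two details must be made explicit. First, before applying the inequality $|E(K)|+\beta(K)\leq 2|V(K)|-2$ to the components of $H^{*}$, you need $H^{*}$ to be outerplanar; this is one line, since replacing each edge of $H^{*}$ by the length-$2$ path through a representative $Y$-vertex embeds a subdivision of $H^{*}$ into $G'$, so any $K_{4}$ or $K_{2,3}$ subdivision in $H^{*}$ would lift to one in $G'$, contradicting Theorem~\ref{thm:charact}. Second, the ``short case check'' you flag as the main obstacle does go through: leaf deletion decreases $|E|+\beta$ by exactly $2$; deleting a degree-$2$ vertex $w$ whose neighbours $u,v$ are adjacent removes two edges while every bridge of $K$ remains a bridge of $K-w$ (so $\beta\leq\beta'$, and only $uv$ can become a new bridge, which is harmless); and suppression transfers the bridge status of the pair $wu,wv$ exactly onto the new edge $uv$, with all other edges keeping their status. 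That said, you can bypass this bookkeeping entirely with a block decomposition: in a connected graph $\sum_{\mathrm{blocks}}(n_{i}-1)=n-1$, a bridge block satisfies $|E_{i}|+\beta_{i}=2=2(n_{i}-1)$, and a $2$-connected outerplanar block satisfies $|E_{i}|\leq 2n_{i}-3<2(n_{i}-1)$ with $\beta_{i}=0$, whence $|E(K)|+\beta(K)\leq 2(n-1)$ immediately for every connected outerplanar $K$. Your tightness example (a chain of $4$-cycles glued at the vertices of $X$) is correct and confirms the lemma is sharp.
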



\section{Proof of Theorem~\ref{thm:lb}}
\label{sec:proof}

We can see that the theorem is true when $4 \leq n \leq 7$.
Hence we assume that $n \geq 8$.
Assume that $G$ is an outerplanar graph with $n$ vertices, and
$S$ is a secure dominating set of $G$ such that $\gamma_{s}(G) = |S|$.

Let $x=|S|$ and $y = |V(G) \setminus S|$.
If $y \leq 4x-4$, then we obtain $x \geq (n+4)/5$ since $n=x+y$.
To show $y \leq 4x-4$, we assume to the contrary that $y \geq 4x-3$.

Let $S=S_{2} \cup S_{1} \cup S_{0}$ be the partition of $S$, where
$S_{i} = \{v \mid |\epn_{G}(v, S)| = i\}$.
Note that, for any vertex $v \in S$, $|\epn_{G}(v, S)| \leq 2$ since
$G$ has no $K_{4}$ as a subgraph and Theorem~\ref{thm:sds_c}.
Further suppose that
\begin{displaymath}
  C = \{ u \mid u \in V(G) \setminus S \text{ and } |N_{G}(u) \cap S|
  \geq 2\}
\end{displaymath}
and $c=|C|$.
Thus $V(G) = S_{2} \cup S_{1} \cup S_{0} \cup \left( \bigcup_{v \in
    S_{2} \cup S_{1}}
  \epn_{G}(v, S)\right) \cup C$.
Hence $x = x_{2}+x_{1}+x_{0}$ and $y = (2x_{2}+x_{1})+c$, where $x_{i}
= |S_{i}|$ for $i = 0,1,2$.
Since $y \geq 4x-3$, we have $(2x_{2}+x_{1})+c \geq
4(x_{2}+x_{1}+x_{0})-3$, or
\begin{equation}
  \label{eq:3}
  c \geq 2x_{2} + 3x_{1} + 4x_{0} - 3.
\end{equation}

Let $B$ be a bipartite subgraph of $G$ defined as follows:
$B$ has the bipartition $(S, C)$, and $u \in S$ and $v \in C$ are
adjacent in $B$ if and only if $uv \in E(G)$.
Since $\deg_{B}(v) \geq 2$ for any $v \in C$, we have $c \leq 2x-2$ by
Lemma~\ref{lem:bipatite_op}.
Hence
\begin{equation}
  \label{eq:2}
  c \leq 2x_{2} + 2x_{1} + 2x_{0} - 2.
\end{equation}

From \eqref{eq:3} and \eqref{eq:2}, we obtain
\begin{equation}
  \label{eq:1}
  x_{1} + 2x_{0} \leq 1.
\end{equation}
Since $x_{1}$ and $x_{0}$ are integers, we have $x_{0}=0$ and
$x_{1} \leq 1$.
Each vertex in $C$ is defended by a vertex in $S$.
Since $G$ has no $K_{4}$ as a subgraph, no vertex in $S_{2}$ defends a
vertex in $C$ by Theorem~\ref{thm:sds_c}.
Also, since $G$ has no $K(2,3)$ as a subgraph, a vertex in $S_{1}$
defends at most two vertices in $C$ by Theorem~\ref{thm:sds_c}.
So $x=x_{2}+x_{1}$, $y = 2x_{2}+x_{1}+c$ and $c \leq 2x_{1}$.
\begin{itemize}
\item Assume that $x_{1}=0$.
  Then $x=x_{2}$ and $c=0$.
  By \eqref{eq:3}, we have $x_{2}=1$.
  Hence we obtain $x=1$ and $y=2$, and this contradicts $n \geq 8$.
\item Assume that $x_{1}=1$.
  Then $x=x_{2}+1$ and $c \leq 2$.
  By \eqref{eq:3}, we have $x_{2}=1$.
  Hence we obtain $x=2$ and $y \leq 5$, and this contradicts
  $n \geq 8$.
\end{itemize}
From the above discussion, we obtain a contradiction.
Hence we complete the proof of Theorem~\ref{thm:lb}.

\vspace{1em}

To show the lower bound is tight, we give an outerplanar graph $G$
with $n$ vertices such that $\gamma_{s}(G) = (n+4)/5$.
Let $n = 5k + 1$ for $k \geq 2$.
Thus $(n+4)/5 = k+1$.

For $k \geq 2$, a graph $G_{k}$ is defined as follows.
\begin{align*}
  V(G_{k}) &= \{v, v_{1}, v_{2}, \dots, v_{k}\} \cup \{w_{1}, w_{2},
             \dots, w_{2k}\} \cup \{u_{i}^{1}, u_{i}^{2} \mid i = 1,2,\dots, k\},
  \\
  E(G_{k}) &= \{vw_{i} \mid i = 1,2,\dots 2k\} \cup \{ v_{i} w_{2i-1},
             v_{i}w_{2i} \mid i = 1,2,\dots,k\} \\
           &{} \cup \{v_{i}u_{i}^{1},
             v_{i}u_{i}^{2}, u_{i}^{1}u_{i}^{2} \mid i = 1,2,\dots,k\}
\end{align*}
The graph $G_{k}$ is an outerplanar graph and has $5k+1$ vertices.
Figure~\ref{fig:extremal} illustrates $G_{k}$ when $k = 4$.
It is easy to see that the set $\{v, v_{1}, v_{2}, \dots, v_{k}\}$ of
$k+1$ vertices is a secure dominating set of $G$.

\begin{figure}[tbp]
  \centering
  \includegraphics{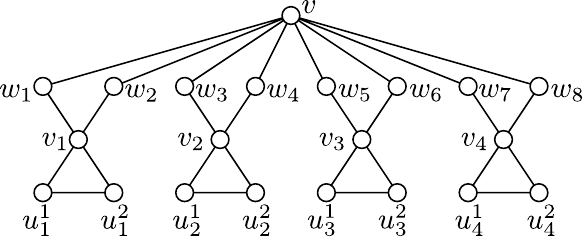}
  \caption{The outerplanar graph $G_{k}$ when $k=4$.}
  \label{fig:extremal}
\end{figure}

If an outerplanar graph $G$ with $5k+1$ vertices contains $G_{k}$ as a
spanning subgraph, then we obtain
\begin{displaymath}
  \frac{n+4}{5} \leq \gamma_{s}(G) \leq \gamma_{s}(G_{k}) = \frac{n+4}{5},
\end{displaymath}
and hence $\gamma_{s}(G) = k+1$.
Furthermore, we show that the converse is also true.

\begin{thm}
  \label{thm:extremal}
  An outerplanar graph $G$ with $n = 5k+1$ vertices for $k \geq 2$ has
  a secure dominating set of size $(n+4)/5$ if and only if $G$
  contains the graph $G_{k}$ as a spanning subgraph.
\end{thm}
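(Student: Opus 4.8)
The ``if'' direction is already established in the discussion preceding the statement, so the plan is to prove the ``only if'' direction: if $G$ has a secure dominating set $S$ with $|S| = (n+4)/5 = k+1$, then $G$ contains $G_{k}$ as a spanning subgraph. The starting point is that $|S| = k+1$ forces $y = n - |S| = 4k = 4|S| - 4$, so I would re-run the counting argument of Theorem~\ref{thm:lb} at the boundary $y = 4x - 4$ rather than under $y \ge 4x - 3$. This turns the estimate into $c = 2x_{2} + 3x_{1} + 4x_{0} - 4$, which together with $c \le 2x - 2$ from Lemma~\ref{lem:bipatite_op} gives $x_{1} + 2x_{0} \le 2$ in place of the strict $x_{1} + 2x_{0} \le 1$.

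First I would pin down the partition of $S$. Since $x_{0}, x_{1}$ are nonnegative integers with $x_{1} + 2x_{0} \le 2$, only $(x_{0}, x_{1}) \in \{(0,0), (0,1), (0,2), (1,0)\}$ are possible. Using that a vertex of $S_{2}$ defends no vertex of $C$ (no $K_{4}$), a vertex of $S_{1}$ defends at most two vertices of $C$ (no $K_{2,3}$), and that every vertex of $C$ must be defended, the cases $(0,0)$ and $(0,1)$ are impossible for $n \ge 8$, since they would either leave a vertex of $C$ undefended or force $n \le 6$. The remaining nontrivial case is $(0,2)$, which forces $x_{2} = 1$ and $n = 11$; here I expect the main obstacle. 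I would rule it out structurally: the two vertices $s, s'$ of $S_{1}$ each defend two vertices of $C$, and tracing the forced adjacencies (each defended vertex is joined to its defender and to that defender's unique private neighbor, and must have a second neighbor in $S$) produces three internally disjoint paths between $s$ and $s'$, hence a subdivision of $K_{2,3}$, contradicting Theorem~\ref{thm:charact}. This leaves only $(x_{0}, x_{1}) = (1,0)$, so $S_{1} = \emptyset$, there is a unique vertex $v \in S_{0}$, and $x_{2} = k$ with $c = 2k$.

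Next I would reconstruct the edges of $G_{k}$. Writing $S_{2} = \{v_{1}, \dots, v_{k}\}$, Theorem~\ref{thm:sds_c} applied to the defence of the private neighbors of $v_{i}$ shows that its two external private neighbors $u_{i}^{1}, u_{i}^{2}$ induce, with $v_{i}$, a triangle, supplying the edges $v_{i}u_{i}^{1}, v_{i}u_{i}^{2}, u_{i}^{1}u_{i}^{2}$. For the set $C$, every vertex of $C$ must be defended, but $S_{1} = \emptyset$ and no vertex of $S_{2}$ can defend a vertex of $C$, so the only possible defender is $v$; since $\epn_{G}(v,S) = \emptyset$, this means each vertex of $C$ is adjacent to $v$. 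Hence $v$ is joined to all $2k$ vertices of $C$, which I label $w_{1}, \dots, w_{2k}$, giving the edges $vw_{j}$.

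Finally I would establish the pairing. Each $w_{j} \in C$ has a second neighbor in $S$, necessarily among $v_{1}, \dots, v_{k}$, so $\sum_{i} d_{i} \ge 2k$, where $d_{i}$ is the number of neighbors of $v_{i}$ in $C$. On the other hand, no $v_{i}$ can have three neighbors in $C$: together with $v$ this would exhibit $K_{2,3}$, again contradicting Theorem~\ref{thm:charact}, so $d_{i} \le 2$ and $\sum_{i} d_{i} \le 2k$. Equality forces $d_{i} = 2$ for every $i$ and shows that the $2k$ adjacencies partition $C$ into $k$ pairs; relabelling so that $v_{i}$ is adjacent to $w_{2i-1}, w_{2i}$ yields exactly the remaining edges of $G_{k}$. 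Since the vertices $v, v_{i}, w_{j}, u_{i}^{1}, u_{i}^{2}$ are distinct and number $5k+1 = n$, the copy of $G_{k}$ is spanning. The delicate points throughout are the two applications of $K_{2,3}$-freeness, which are precisely what eliminate case $(0,2)$ and force the clean pairing of $C$ against $S_{2}$.
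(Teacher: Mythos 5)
Your proposal is correct, and its skeleton coincides with the paper's: the boundary counting $c = 2x_{2}+3x_{1}+4x_{0}-4$ combined with $c \leq 2x-2$ from Lemma~\ref{lem:bipatite_op} to get $x_{1}+2x_{0} \leq 2$; the elimination of all cases except $(x_{0},x_{1})=(1,0)$; the triangle on $\{v_{i},u_{i}^{1},u_{i}^{2}\}$ via Theorem~\ref{thm:sds_c}; the observation that only the $S_{0}$-vertex $v$ can defend $C$, so $v$ is adjacent to all of $C$; and the degree-sum pairing of $C$ against $S_{2}$ via $K_{2,3}$-freeness --- these are exactly the paper's Claims~1 through~4. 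The one genuine divergence is your handling of the case $(x_{0},x_{1})=(0,2)$, and here your route is actually stronger than the paper's. The paper kills this case purely by counting, writing ``$2(k-1)+2 \geq 4k$, so $k \leq 0$''; but that substitution is an arithmetic slip: with $x_{1}=2$ the stated count $2x_{2}+3x_{1} \geq 4k$ reads $2(k-1)+6 \geq 4k$, i.e., $k \leq 2$, which does \emph{not} contradict $k \geq 2$, so the boundary case $k=2$ (hence $n=11$, $x_{2}=1$) survives the paper's printed argument. Your structural elimination covers precisely this residual case, and it checks out: the two $S_{1}$-vertices $s,s'$ must each defend exactly two of the four vertices of $C$, so each such $w$ is adjacent to its defender and to the defender's private neighbor; since two $C$-vertices defended by the same vertex already share those two neighbors, they cannot share any third neighbor without exhibiting $K_{2,3}$ outright, so the forced second $S$-neighbors distribute as you describe and produce three internally disjoint $s$--$s'$ paths (two of length 2 and one of length 4 through the $S_{2}$-vertex), a subdivision of $K_{2,3}$ contradicting Theorem~\ref{thm:charact}. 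In short, your proof follows the paper's architecture but replaces one numeric step by a structural one, and in doing so closes a subcase that the paper's proof, read literally with the arithmetic corrected, leaves open.
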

\begin{proof}
  It is sufficient to show the necessity.
  Assume that $S$ is a minimum secure dominating set of $G$ and $|S| =
  k+1$.
  Then, we have $|V(G) \setminus S| = 4k$.
  Similar to the discussion in the proof Theorem~\ref{thm:lb}, we
  define $S_{2}, S_{1}, S_{0}$ and $C$, and $x_{i} = |S_{i}|$ and
  $c = |C|$.
  Since $|S| = k+1 = x_{2} + x_{1} + x_{0}$ and $|V(G) \setminus S| =
  4k = 2 x_{2} + x_{1} + c$, we obtain
  \begin{equation}
    \label{eq:4}
    c = 2 x_{2} + 3 x_{1} + 4 x_{0} - 4.
  \end{equation}
  Since $c \leq 2x-2$ by Lemma~\ref{lem:bipatite_op}, we have
  \begin{equation}
    \label{eq:5}
    c \leq 2 x_{2} + 2 x_{1} + 2 x_{0} - 2.
  \end{equation}
  From \eqref{eq:4} and \eqref{eq:5}, we obtain
  \begin{equation}
    \label{eq:6}
    x_{1} + 2 x_{0} \leq 2.
  \end{equation}

  \vspace{1em}
  \noindent
  \textbf{Claim 1.} $x_{1} = 0$ and $x_{0} = 1$.

  \noindent
  \textit{Proof of Claim~1.}
  Assume to the contrary that $x_{1} \geq 1$.
  By \eqref{eq:6}, $x_{0} = 0$ and $x_{1} = 1$ or 2.
  As mentioned earlier, any vertex in $S_{2}$ defends at most two
  vertices, and any vertex in $S_{1}$ defends at most three vertices.
  Hence, it satisfies $2 x_{2} + 3 x_{1} \geq |V(G) \setminus S| =
  4k$.
  If $x_{1} = 1$ and $x_{2} = k$, we have $2k + 3 \geq 4k$ or
  $k \leq 3/2$, which is a contradiction.
  If $x_{1} = 2$ and $x_{2} = k-1$, we have $2(k-1) + 2 \geq 4k$ or
  $k \leq 0$, which is also a contradiction.

  Hence we obtain $x_{1} = 0$ and $x_{0} \leq 1$.
  Assume that $x_{0} = 0$.
  Then $S = S_{2}$ and the vertices in $S$ defend at most $2(k+1) <
  4k$ vertices, and hence a contradiction occurs.

  \vspace{1em}
  By Claim~1, we obtain $x_{2} = k$, $x_{1} = 0$, and $x_{0} = 1$.
  Let $S_{2} = \{v_{1}, v_{2}, \dots, v_{k}\}$ and $S_{0} = \{v\}$, and
  $\epn_{G}(v_{i}, S) = \{u_{i}^{1}, u_{i}^{2}\}$ for every
  $v_{i} \in S_{2}$.

  \vspace{1em}
  \noindent
  \textbf{Claim 2.}  For each $1 \leq i \leq k$, three vertices
  $v_{i}, u_{i}^{1}, u_{i}^{2}$ form a triangle.

  \noindent
  \textit{Proof of Claim~2.}
  Since $S$ is a secure dominating set and $u_{i}^{1}, u_{i}^{2}$ are
  the external private neighbors of $v_{i}$, the three vertices
  $v_{i}$, $u_{i}^{1}$ and $u_{i}^{2}$ from a triangle by
  Theorem~\ref{thm:sds_c}.

  \vspace{1em}

  Since $|V(G) \setminus S| = 4k$ and
  $V(G) \setminus S = \{u_{1}^{1}, u_{1}^{2}, \dots,
  u_{k}^{1},u_{k}^{2}\} \cup C$, we have $|C| = 2k$.  Let
  $C = \{w_{1}, w_{2}, \dots, w_{2k}\}$.

  \vspace{1em}
  \noindent
  \textbf{Claim 3.}
  Each vertex in $C$ is adjacent to $v \in S_{0}$.

  \noindent
  \textit{Proof of Claim~3.}
  Since any $v_{i} \in S_{2}$ defends the two vertices $u_{i}^{1}$ and
  $u_{i}^{2}$ but cannot defend any vertex in $C$, vertex
  $w_{i} \in C$ is defended by $v \in S_{0}$.
  Thus $w_{i}$ is adjacent to $v$ for $i=1,2,\dots,2k$.

  \vspace{1em}
  \noindent
  \textbf{Claim 4.} For any $v_{i}, v_{j} \in S_{2}$ with $i \neq j$,
  $v_{i}$ is adjacent to exactly two vertices in $C$ and
  $N_{G}(v_{i}) \cap N_{G}(v_{j}) = \emptyset$.

  \noindent
  \textit{Proof of Claim~4.}
  Assume that some $v_{i} \in S_{2}$ is adjacent to three vertices
  $w_{p}, w_{q}, w_{r} \in C$.
  Since $v$ is adjacent to the three vertices by Claim~3, the induced
  subgraph by the set $\{v, v_{i}, w_{p}, w_{q}, w_{r}\}$ contains
  $K_{2,3}$ as a subgraph, and thus a contradiction occurs by
  Theorem~\ref{thm:charact}.
  Hence, each $v_{i} \in S_{2}$ is adjacent to at most two vertices
  in $C$.
  Since $|S_{2}| = k$ and $|C|=2k$, for $i \neq j$, $v_{i}$ is
  adjacent to exactly two vertices in $C$ and $N_{G}(v_{i}) \cap
  N_{G}(v_{j}) = \emptyset$.

  \vspace{1em}
  \noindent
  By Claim~2, 3, and~4, $G$ contains the graph $G_{k}$ as a spanning
  subgraph.
\end{proof}


\end{document}